\title{The 21 Card Trick and Its Generalization}
\author{Dibyajyoti Deb}
\address{Department of Mathematics, Oregon Institute of Technology, Klamath Falls, OR, USA}
\email{dibyajyoti.deb@oit.edu}
\keywords{Recreational mathematics, games}
\subjclass[2010]{97A20}
\theoremstyle{plain}
\newtheorem{theorem}{Theorem}[section]
\newtheorem{lemma}[theorem]{Lemma}
\newtheorem{corollary}[theorem]{Corollary}
\newtheorem{claim}[theorem]{Claim}
\theoremstyle{definition}
\newtheorem{definition}[theorem]{Definition}
\newcommand{\mZ}{\mathbb{Z}}
\newcommand{\mR}{\mathbb{R}}
\DeclareMathOperator{\lcm}{lcm}
\newcommand\numberthis{\refstepcounter{equation}\tag{\theequation}}
\DeclarePairedDelimiter{\ceil}{\lceil}{\rceil}
\DeclarePairedDelimiter{\floor}{\lfloor}{\rfloor}
\begin{document} 

\begin{abstract}
The 21 card trick is well known. It was recently shown in an episode of the popular YouTube channel \href{https://www.youtube.com/watch?v=d7dg7gVDWyg}{Numberphile}. In that trick, the audience is asked to remember a card, and through a series of steps, the magician is able to find the card. In this article, we look into the mathematics behind the trick, and look at a complete generalization of the trick. We show that this trick can be performed with any number of cards. 
\end{abstract}

\maketitle

\section{Introduction to the 21 card trick} 
The \textbf{21 card trick} (21CT) is a very popular card trick. It was also recently shown in an episode of the popular YouTube channel \href{https://www.youtube.com/watch?v=d7dg7gVDWyg}{Numberphile}. We first explain how this trick is performed in a series of steps. For the purpose of this demonstration, we will assume that a magician (henceforth, Magi), is showing this trick to his friend and audience (henceforth, Audy).
\begin{enumerate}
\item Audy randomly chooses 21 cards from a deck of cards. He remembers one card from that set, shuffles the set of 21 cards and hands it back to Magi.
\item Magi requests Audy to pay attention as he puts the cards face up one at a time adjacent to each other creating 3 stacks of 7 cards each in the process. Magi asks Audy to tell him which stack contains his card.
\item Magi then puts the stack that contained Audy's card between the two other stacks, and then repeats Step (2) two more times. 
\item Magi now puts the cards in the table one at a time, and stops at the 11th card which turns out to be Audy's card.
\end{enumerate} 
Note that this trick can be made more ``magical" with some extra activities, which we will not discuss as they don't contribute anything to the main problem. Our current goal is to find out why the 11th card on the deck happened to be Audy's card. We will explore the mathematics behind this after we look at a detailed example of the trick. 

Once Audy hands the shuffled deck of cards to Magi, we know that Audy's card could be any one of the 21 cards in that deck. Let's number the cards from top to bottom of the deck by $1, 2, 3, \ldots, 21$. We shall use the term \textbf{\emph{deck id}} to denote the position of Audy's card in the deck. Similarly, we will use the term \textbf{\emph{stack id}} to denote the position of Audy's card in a stack from the top.
\begin{framed}
\begin{definition}
An \textbf{iteration} is considered to be the process of splitting the cards of a deck into stacks.
\end{definition}
\begin{definition}
The \textbf{deck id} of Audy's card after $k(\geq 1)$ iterations of splitting into stacks, denoted by $d_k$, is the position of his card from the top, in a deck of $n$ distinct cards. 
\end{definition}
\begin{definition}
The \textbf{stack id} of Audy's card after $k( \geq 1)$ iterations of splitting into stacks, denoted by $s_k$, is the position of his card from the top, in its  individual stack.
\end{definition}
\end{framed}

\noindent
Note that the initial deck id of Audy's card before any iterations happen is denoted by $d_0$. We now show a detailed example of the 21CT in action.

\begin{enumerate}
\item Audy selects a random collection of 21 cards. The cards are:

\smallskip
\noindent
\psset{inline=boxed}
\fourc, \tenc, \tred, \tenh, \tend, \eigd, \sixc, \tres, \tens, \twoc, \fives, \sixh, \twod, \eigc, \Ac, \Kd, \ninec, \fours, \nineh, \Kh, \Jd

\smallskip
\noindent
He remembers a card from this set. Suppose it's \fours. He then shuffles this deck again and hands it over to Magi. The shuffled deck looks like this:

\smallskip
\noindent
\eigd, \tend, \nineh, \twoc, \tred, \fives, \fourc, \sixc, \sixh, \twod, \tenc, \tenh, \Kd, \Jd, \ninec, \eigc, \tres, \Ac, \tens, \fours, \Kh \, \, ($d_0 = 20$)

\enspace
\noindent
Note that an iteration hasn't happened yet. Thus, $s_0$ is not defined, and $d_0 = 20$ (as \fours \, is the 20th card in the deck at the moment).

\smallskip
\item Now Magi does the first iteration by putting these cards face up one at a time adjacent to each other creating 3 stacks. The stacks then look like this:

\smallskip
\noindent
\begin{table}[H]
\centering
\begin{tabular}{|c|c|c|}
\hline
Stack 1 & Stack 2 & Stack 3 \\
\hline 
\eigd & \tend & \nineh \\ 
\hline 
\twoc & \tred & \fives \\ 
\hline 
\fourc & \sixc & \sixh \\ 
\hline 
\twod & \tenc & \tenh \\ 
\hline 
\Kd & \Jd & \ninec \\ 
\hline 
\eigc & \tres & \Ac \\ 
\hline 
\tens & \fours & \Kh \\ 
\hline 
\end{tabular} 
\caption{Stacks after Iteration 1 ($s_1 = 7$ as \protect\psset{inline=boxed} \protect\fours \, is the 7th card in the second stack)}
\end{table}

\smallskip
\noindent
He then asks Audy which stack contains his card. Audy replies by saying that his card is in Stack 2 (\fours \, is in Stack 2). 

\item Magi then puts Stack 2 in between Stack 1 and Stack 3. The set of 21 cards now look as follows from top to bottom: 

\smallskip
\noindent
\eigd, \twoc, \fourc, \twod, \Kd, \eigc, \tens, \tend, \tred, \sixc, \tenc, \Jd, \tres, \fours, \nineh, \fives, \sixh, \tenh, \ninec, \Ac, \Kh \, \, ($d_1 = 14$)

\smallskip
\noindent 
Magi now repeats Step (2) two more times. The stacks after iteration 2 looks like this:

\smallskip
\noindent
\begin{table}[H]
\centering
\begin{tabular}{|c|c|c|}
\hline
Stack 1 & Stack 2 & Stack 3 \\
\hline 
\eigd & \twoc  & \fourc  \\ 
\hline 
\twod & \Kd  & \eigc  \\ 
\hline 
\tens & \tend  & \tred  \\ 
\hline 
\sixc & \tenc & \Jd   \\ 
\hline 
\tres  & \fours & \nineh  \\ 
\hline 
\fives & \sixh & \tenh  \\ 
\hline 
\ninec & \Ac & \Kh \\ 
\hline 
\end{tabular} 
\caption{Stacks after Iteration 2 ($s_2 = 5$)}
\end{table}

\noindent
\psset{inline=boxed}
He then again asks Audy which stack contains his card. Audy replies by saying that his card is in Stack 2 again (\fours \, is in Stack 2). Magi again puts Stack 2 in between Stack 1 and Stack 3. The set of 21 cards now look as follows from top to bottom:

\smallskip
\noindent
\eigd, \twod, \tens, \sixc, \tres, \fives, \ninec, \twoc, \Kd, \tend, \tenc, \fours, \sixh, \Ac, \fourc, \eigc, \tred, \Jd, \nineh, \tenh, \Kh \, \, ($d_2 = 12$)

\smallskip
\noindent
Magi again repeats the process of splitting the cards into 3 stacks for the third and final time. The stacks after iteration 3 look like this:

\smallskip
\noindent
\begin{table}[H]
\centering
\begin{tabular}{|c|c|c|}
\hline
Stack 1 & Stack 2 & Stack 3 \\
\hline 
\eigd & \twod  & \tens  \\ 
\hline 
\sixc & \tres  & \fives  \\ 
\hline 
\ninec & \twoc  & \Kd  \\ 
\hline 
\tend & \tenc & \fours   \\ 
\hline 
\sixh  & \Ac & \fourc  \\ 
\hline 
\eigc & \tred & \Jd  \\ 
\hline 
\nineh & \tenh & \Kh \\ 
\hline 
\end{tabular} 
\caption{Stacks after Iteration 3 ($s_3 = 4$)}
\end{table}

\smallskip
\noindent
Now for the final time, Magi asks Audy about the stack that contains his card. Audy replies by saying that his card is in Stack 3 (\fours \, is in Stack 3). Magi then puts Stack 3 in between Stack 1 and Stack 2. The set of 21 cards now look as follows from top to bottom:

\smallskip
\noindent
\eigd, \sixc, \ninec, \tend, \sixh, \eigc, \nineh, \tens, \fives, \Kd, \fours, \fourc, \Jd, \Kh, \twod, \tres, \twoc, \tenc, \Ac, \tred, \tenh \, \, ($d_3 = 11$)

\item Magi now puts each card from the top face down on the table one by one, and flips over the \textbf{11th card} which turns out to be Audy's card, \fours.

\smallskip
\noindent
\begin{center}
\eigd, \sixc, \ninec, \tend, \sixh, \eigc, \nineh, \tens, \fives, \Kd, 
\fcolorbox{blue}{green}{\fours}
\end{center}

\end{enumerate}


\smallskip
\noindent
Magi correctly finds out Audy's card, and it leaves Audy startled. He wonders what kind of voodoo did Magi apply in all of this. Little does he know about the power of mathematics behind this trick.

\section{The Mathematics behind the 21 card trick}
Most card tricks rely on mathematics. The 21CT is no exception. We now look at the mathematics behind this trick. 

\smallskip
\noindent
Before we look into the steps of the 21CT again, we would like to look at the definition of the \textbf{ceiling} and \textbf{floor} functions, and three lemmas that will be used extensively in this article. 

\bigskip
\noindent

\begin{framed}
\begin{definition}
Suppose $x \in \mR$. The \textbf{ceiling} of $x$ denoted by $\ceil{x}$ is the smallest integer greater than or equal to $x$. In general, if $\ceil{x} = n \in \mZ$, then $n-1 < x \leq n$.
\end{definition}
\begin{definition}
Suppose $x \in \mR$. The \textbf{floor} of $x$ denoted by $\floor{x}$ is the largest integer less than or equal to $x$. In general, if $\floor{x} = n \in \mZ$, then $n \leq x < n+1$. In this case $x - n = x - \floor{x}$ is called the \emph{fractional part} of $x$, and is denoted by $\{x\}$.
\end{definition}
\end{framed}

\begin{lemma}\label{lem1}
For $x, y \in \mR$, if $x \leq y$, then
$$\ceil{x} \leq \ceil{y}$$
\end{lemma}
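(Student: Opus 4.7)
The plan is to derive the inequality directly from the characterization of the ceiling as the smallest integer $\geq$ its argument, without needing contradiction or case analysis.

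First I would set $n = \lceil y \rceil$, so that by the definition stated just above, $n$ is an integer satisfying $y \leq n$ (the full defining inequality is $n-1 < y \leq n$, but only the right half is needed here). Combining this with the hypothesis $x \leq y$ gives $x \leq n$, which exhibits $n$ as an integer that is an upper bound for $x$.

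Next I would invoke the definition of $\lceil x \rceil$ as the \emph{smallest} integer greater than or equal to $x$. Since $n$ is an integer with $n \geq x$, the minimality clause forces $\lceil x \rceil \leq n$, and substituting back $n = \lceil y \rceil$ yields the desired conclusion $\lceil x \rceil \leq \lceil y \rceil$.

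There is no real obstacle here; the result is essentially a one-line consequence of the definition. The only thing to be careful about is phrasing the step ``$n$ is the smallest integer $\geq x$, and $\lceil y \rceil$ is one such integer'' cleanly, so that the reader sees exactly which half of the definitional inequality $n-1 < y \leq n$ is being used. I would not use a contradiction argument (assuming $\lceil x \rceil > \lceil y \rceil$ and deriving a contradiction via $\lceil x \rceil \geq \lceil y \rceil + 1$) because the direct approach is shorter and more transparent.
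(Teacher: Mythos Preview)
Your argument is correct. It differs from the paper's in that you use the ``smallest integer'' clause of the ceiling definition directly: since $n = \lceil y \rceil$ is an integer with $n \geq y \geq x$, minimality of $\lceil x \rceil$ among integer upper bounds of $x$ gives $\lceil x \rceil \leq n$ in one stroke. The paper instead works from the bracket characterization $n-1 < x \leq n$ and splits into two cases according to whether $y \leq \lceil x \rceil$ or $y > \lceil x \rceil$, showing in the first case that $\lceil y \rceil = \lceil x \rceil$ and in the second that $\lceil x \rceil < \lceil y \rceil$. Your route is shorter and avoids the case split entirely; the paper's route has the minor advantage of relying only on the double inequality $n-1 < x \leq n$ (which is also the form used in the proofs of the subsequent Lemmas~\ref{lem2} and~\ref{lem3}), but at the cost of an unnecessary bifurcation.
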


\begin{proof}
We divide this into two cases.

\enspace

\noindent
\underline{Case 1 :} ($y \leq \ceil{x}$) As $\ceil{x} - 1 < x \leq \ceil{x}$ by definition, and $x \leq y$, therefore, $\ceil{x} - 1 < y \leq \ceil{x}$. This implies that $\ceil{y} = \ceil{x}$, and therefore, $\ceil{x} \leq \ceil{y}$.

\enspace

\noindent
\underline{Case 2 :} ($y > \ceil{x}$). By definition, $y \leq \ceil{y}$. Therefore, we have $\ceil{x} < y \leq \ceil{y}$. Hence, $\ceil{x} < \ceil{y}$.

\enspace

\noindent
Combining the two above case, we have the desired inequality.
\end{proof}

\begin{lemma}\label{lem2}
For $n \in \mZ$ and $x \in \mR$, 
$$\ceil{n+x} = n + \ceil{x}$$
\end{lemma}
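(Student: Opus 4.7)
The plan is to prove this directly from the defining property of the ceiling function, without any case analysis. The key observation is that the definition given in the excerpt gives a two-sided characterization: $\lceil y \rceil = k$ (where $k \in \mZ$) if and only if $k - 1 < y \leq k$. So to show $\lceil n + x \rceil = n + \lceil x \rceil$, it suffices to verify that $n + \lceil x \rceil$ is an integer satisfying this characterization for $y = n + x$.

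Concretely, I would first set $m = \lceil x \rceil$, which is an integer by definition. Applying the characterization, I get $m - 1 < x \leq m$. The main step is then to add the integer $n$ to all three parts of this inequality, yielding
\[
(n + m) - 1 < n + x \leq n + m.
\]
Since $n + m \in \mZ$, the characterization now applies in reverse: this inequality says precisely that $\lceil n + x \rceil = n + m = n + \lceil x \rceil$, which is the desired equality.

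There is essentially no obstacle here; the only subtlety worth flagging is the need that $n$ be an integer, which is used twice: first to ensure that $n + m$ is an integer (so the characterization can be invoked), and second, more implicitly, to ensure that adding $n$ preserves the strict/non-strict structure of the inequality (which it does automatically for any real shift, integer or not, but integrality is what makes $n+m$ eligible as a ceiling value). No floor-ceiling identities beyond the definition are needed, and \Cref{lem1} is not required for this particular lemma.
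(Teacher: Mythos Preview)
Your proof is correct and is essentially the same as the paper's: both arguments use only the defining inequality $k-1<y\le k\iff\ceil{y}=k$ and shift by the integer $n$. The only cosmetic difference is direction---the paper names $d=\ceil{n+x}$ first and subtracts $n$ to identify $\ceil{x}=d-n$, whereas you name $m=\ceil{x}$ first and add $n$ to identify $\ceil{n+x}=n+m$.
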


\begin{proof}
Suppose $\ceil{n+x} = d$. Thus,
\begin{equation*}
\begin{split}
d-1  < n + & x \leq d \\
(d-n)-1 < \, & x \leq d-n
\end{split}
\end{equation*}
This implies, $\ceil{x} = d - n$. Adding $n$ to both sides, we have $n + \ceil{x} = d$ which shows the desired result.
\end{proof}

\begin{lemma}\label{lem3}
For $n, m \in \mZ$ with $m > 0$, and $x \in \mR$, 
$$\ceil[\Bigg]{\frac{n + \ceil{x}}{m}} = \ceil[\Bigg]{\frac{n+x}{m}}$$ 
\end{lemma}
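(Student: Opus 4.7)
The plan is to prove the identity by squeezing from both sides. One direction is immediate: since $x \le \lceil x \rceil$ and $m > 0$, dividing by $m$ preserves the inequality, and Lemma \ref{lem1} gives $\ceil{(n+x)/m} \le \ceil{(n+\lceil x\rceil)/m}$.

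For the reverse direction, I would set $k := \ceil{(n+x)/m}$, which by definition of the ceiling means $n + x \le mk$. The target then becomes the integer inequality $n + \lceil x \rceil \le mk$; once this is in hand, dividing by the positive integer $m$ and using that $k \in \mZ$ gives $\ceil{(n+\lceil x\rceil)/m} \le k$, completing the squeeze.

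The key step, and really the only place where care is required, is establishing that last bound. Starting from the strict ceiling inequality $\lceil x \rceil < x + 1$, I would chain $n + \lceil x \rceil < n + x + 1 \le mk + 1$. Now comes the crucial observation: both $n + \lceil x \rceil$ and $mk + 1$ are integers (using $n, m, k \in \mZ$), so a strict inequality between them sharpens to the non-strict inequality $n + \lceil x \rceil \le mk$. This integer-sharpening trick is the heart of the argument, and it is exactly the point where the hypothesis $m \in \mZ$ (rather than merely $m > 0$) is being used. As an alternative route, one could first invoke Lemma \ref{lem2} to rewrite $n + \lceil x \rceil = \lceil n + x \rceil$, reducing the statement to the $n = 0$ case $\ceil{\lceil y \rceil / m} = \ceil{y/m}$, but the same sharpening step would still do the work.
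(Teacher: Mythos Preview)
Your proof is correct and follows essentially the same approach as the paper: both name $k = \ceil{(n+x)/m}$ and use the fact that $mk - n$ is an integer to pass from the bound $x \le mk - n$ to $\ceil{x} \le mk - n$. The only cosmetic difference is that the paper establishes the full double inequality $k-1 < (n+\ceil{x})/m \le k$ in one sweep, whereas you split the two directions and phrase the key step as ``strict inequality between integers sharpens to non-strict''; these are the same argument.
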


\begin{proof}
Suppose $\ceil[\Bigg]{\dfrac{n+x}{m}} = d$. Thus,
\begin{alignat*}{2}
d-1 &< &\frac{n+x}{m} &\leq d \\
md - m &< &n + x &\leq md \\
md - m - n &< &x  \, \, \, \, \, &\leq md - n 
\end{alignat*}
As $m, d, n \in \mZ$, thus $md-m-n$ and $md-n$ are integers. By definition of the ceiling function, $x \leq \ceil{x}$. Hence, $md - m - n < \ceil{x}$. On the other hand, $x, md-n \in \mR$. Hence, by Lemma \ref{lem1}, $\ceil{x} \leq \ceil{md-n} = md-n$ as $md - n \in \mZ$. Thus, we have 

\begin{alignat*}{2}
md-m-n &< &\ceil{x} \, \, \, \, \, \, \, &\leq md-n \\
md - m &< &n + \ceil{x} &\leq md \\
\frac{md - m}{m} &< &\frac{n + \ceil{x}}{m} &\leq \frac{md}{m} \\
d-1 &< &\frac{n + \ceil{x}}{m} &\leq d
\end{alignat*}
This implies, $\ceil[\Bigg]{\dfrac{n + \ceil{x}}{m}} = d$, which proves the result.

\end{proof}

\noindent
Now we look back at the steps of the card trick again. 

\enspace

\begin{enumerate}[Step 1:]
\item Audy hands the shuffled deck to Magi. We denote the initial deck id of Audy's card by $d_0$ as no iterations of splitting into stacks has been done yet. Therefore, 
$$1 \leq d_0 \leq 21$$
\item Magi now puts the cards from the top, face up, on the table adjacent to each other, creating 3 stacks of 7 cards each. Audy's card is in one of the three stacks. We claim the following.
\begin{claim}\label{claim1}
If $s_k$ denotes the stack id, and $d_k$ denote the deck id of Audy's card after $k$ iterations, then,
$$s_{k} = \ceil[\Big]{\dfrac{d_{k-1}}{3}} \hspace{0.2in} \text{for} \hspace{0.1in} k \geq 1$$
\end{claim}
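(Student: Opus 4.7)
The plan is to track exactly how Magi's cyclic dealing assigns deck positions to positions within the three stacks. First I would observe that since the cards are dealt from the top of the deck, one at a time, cycling through stacks $1, 2, 3$ in order, the $s$-th card from the top of any stack is placed during the $s$-th pass through the three stacks, and thus comes from one of the three consecutive deck positions $3(s-1)+1$, $3(s-1)+2$, $3(s-1)+3$. This is the one place a little care is needed, and I would justify it either by a short induction on $s$ or by checking the three residue classes of the deck position modulo $3$ separately.

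Granting this dealing rule, I would then specialize to Audy's card. Entering iteration $k$, his card sits at deck position $d_{k-1}$ and, after the iteration, at position $s_k$ of whichever stack it lands in. The observation above therefore yields
\[
3(s_k - 1) + 1 \;\leq\; d_{k-1} \;\leq\; 3 s_k,
\]
which is equivalent to $s_k - 1 < d_{k-1}/3 \leq s_k$. By the definition of the ceiling function given earlier, this is precisely the assertion $s_k = \ceil{d_{k-1}/3}$, i.e.\ the claim.

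The only real obstacle is the off-by-one bookkeeping of the dealing; this is why I prefer to extract the explicit two-sided inequality $3(s_k-1) < d_{k-1} \leq 3 s_k$ and then read off the ceiling directly, rather than working modularly. Lemmas \ref{lem1}--\ref{lem3} do not seem to be needed for this claim itself; I expect they will enter in the subsequent claims where several iterations are chained together and one has to simplify nested ceilings in order to express $d_k$ as a clean function of $d_{k-1}$.
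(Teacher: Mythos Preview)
Your proposal is correct and follows essentially the same route as the paper: both arguments observe that the $s$-th position in any stack receives the cards with deck ids $3s-2,\,3s-1,\,3s$, extract the two-sided inequality $3(s_k-1)<d_{k-1}\le 3s_k$, divide by $3$, and read off the ceiling from its definition. Your remark that Lemmas~\ref{lem1}--\ref{lem3} are not needed here, and will only enter when nested ceilings are simplified in later results, is also accurate.
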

\begin{proof}
It is imperative for us to note that a stack id for Audy's card is created after every iteration. Hence, $s_k$ is defined only when $k \geq 1$. On the other hand, a deck id for Audy's card is created when the 21 cards are all in a single deck, which happens for the first time when Audy hands over the shuffled deck to Magi. Thus, $d_k$ is defined for $k \geq 0$. Also, note that the $k$th iteration which creates $s_{k}$ is performed using the 21 card deck after the $(k-1)$th iteration which creates $d_{k-1}$. Hence the relationship between $s_{k}$ and $d_{k-1}$. It is clear that the stack id of a card is the ``row number" of the card in the stack. Thus for $k \geq 1$, the first row consists of cards with $d_{k-1} = 1, 2, 3$. The second row consists of cards with $d_{k-1} = 4, 5, 6$ and so on. This means that the $n$th row consists of cards with $d_{k-1} = 3n-2, 3n-1, 3n$. As $n =s_{k}$. Thus,
\begin{alignat*}{2}
3s_{k}-3 &< &d_{k-1} &\leq 3s_{k} \\
s_{k} -1 &< &\frac{d_{k-1}}{3} &\leq s_{k}
\end{alignat*}
Therefore, $\ceil[\Big]{\dfrac{d_{k-1}}{3}} = s_{k}$.
\end{proof}
 
\noindent
As $1 \leq d_0 \leq 21$, hence, 
\begin{alignat*}{2}
\dfrac{1}{3} &\leq &\dfrac{d_0}{3} \, &\leq 7 \\
1 &\leq &\ceil[\Big]{\dfrac{d_0}{3}} &\leq 7 \qquad (\text{By Lemma } \ref{lem1}) \\
1 &\leq &s_1 \, \, &\leq 7 \qquad (\text{By Claim } \ref{claim1})
\end{alignat*}
 
\noindent
Now Magi asks Audy to tell him the stack that contains his card. Audy responds by saying that it's Stack 2.
 
\item Magi then puts Stack 2 in between the other two stacks and creates a full deck of 21 cards. Note that there is a stack of 7 cards on top of Stack 2 at the moment. The position of Audy's card in Stack 2 is currently $s_1$. Hence, the new deck id of Audy's card after the first iteration is, 
$$d_1 = 7+ s_1$$
Since $1 \leq s_1 \leq 7$, therefore,
\begin{alignat*}{2}
8 &\leq &7+s_1 &\leq 14 \\
8 &\leq &d_1 \, \, \, &\leq 14 
\end{alignat*}

\noindent
Now the second iteration is performed, and the stack id $s_2$ is created,
\begin{alignat*}{2}
8 &\leq &d_1  &\leq 14 \\
\dfrac{8}{3} &\leq &\dfrac{d_1}{3}  &\leq \dfrac{14}{3} \\
3 &\leq &\ceil[\Big]{\dfrac{d_1}{3}} &\leq 5 \qquad (\text{By Lemma } \ref{lem1}) \\
3 &\leq &s_2 \, \, &\leq 5 \qquad (\text{By Claim } \ref{claim1})
\end{alignat*}

\noindent
Note that Magi now knows that the stack id of Audy's card is either 3, 4 or 5, but that is not good enough, as he needs to find the exact card. He again asks Audy to tell him the stack where his card belongs. Audy mentions that it's Stack 2, and Magi puts Stack 2 in between the other two stacks to create a deck of 21 cards. The position of Audy's card in Stack 2 is currently $s_2$. Therefore, the new deck id of Audy's card after the second iteration is,
$$d_2 = 7 + s_2$$
Now, $3 \leq s_2 \leq 5$, therefore,
\begin{alignat*}{2}
10 &\leq &7+s_2 &\leq 12 \\
10 &\leq &d_2 \, \, \, &\leq 12 
\end{alignat*}

\noindent
Now the third iteration is performed, and the stack id $s_3$ is created,
\begin{alignat*}{2}
10 &\leq &d_2 \, \, &\leq 12 \\
\dfrac{10}{3} &\leq &\dfrac{d_2}{3} \, \, &\leq \dfrac{12}{3} \\
4 &\leq &\ceil[\Big]{\dfrac{d_2}{3}} &\leq 4 \qquad (\text{By Lemma } \ref{lem1}) \\
4 &\leq &s_3 \, \, &\leq 4 \qquad (\text{By Claim } \ref{claim1})
\end{alignat*}

\noindent
This means $s_3 = 4$. Magi can now see the finish line. Magi has now found the exact stack id of Audy's card in a stack, but he does not know exactly which stack contains Audy's card. He thus asks Audy one final time about the stack that contains his card. Audy mentions Stack 3, and Magi puts Stack 3 in between the other two stacks.  The position of Audy's card in the middle stack is currently $s_3 = 4$. Hence, the new deck id of Audy's card after the third iteration is,
$$d_3 = 7 + s_3 = 11$$

\item Magi now knows that Audy's card is the 11th card from the top of the 21 card deck. He puts each of the card from the top on the table, face down, until he flips over the 11th card to Audy's delight.

\end{enumerate}

\section{Generalization of the 21 card trick}\label{sec3}
Now that we have seen how the 21CT works, and the mathematics behind it, we ask ourselves whether the same trick can be performed by Magi using a random set of, let's say $C$ number of cards. He also wants to find out the number of stacks he needs to split his cards into, where should he put the stack that contains Audy's card, how many iterations should he perform, and lastly, the deck id of Audy's card after the final iteration.

\noindent
Let's help out Magi perform his trick. Before we proceed, we introduce some notations.
\begin{itemize}
\item Number of given cards = $C(>0) \in \mZ$.
\item Number of stacks to split into = $n(>0 \text{ and} \leq C) \in \mZ$.
\item Number of stacks to put on top of the stack which contains Audy's card = $j(\geq 0 \text{ and} <n)\in \mZ$.
\item Number of iterations to be performed = $k(>0) \in \mZ$.
\item Deck id of Audy's card after the final iteration = $l(>0) \in \mZ$.
\end{itemize}
We have thus created a 4-tuple ($C, n, j, k$) that provides information about the magic trick. 
\begin{framed}
\begin{definition}
A magic trick ($C, n, j, k$) is \textbf{solvable}, if there exists integers $k$ and $l$ such that the magic trick can be performed with the given parameters in $k$ iterations. In this case, we write $$(C, n, j, k) = l$$
If such a $k$ or $l$ does not exist then the magic trick cannot be performed, and we say that ($C, n, j, k$) is \textbf{not solvable}.
\end{definition}
\end{framed}

\noindent
The 21 card trick is represented by (21, 3, 1, 3) and is solvable and $(21, 3, 1, 3) = 11$. Following along the lines of the 21CT, our main assumption here is that each stack contains the same number of cards. Thus, $n \mid C$. Thus, $n \leq C$.

\noindent
Before we proceed any further, we first look at a generalization of Claim \ref{claim1} in the form of a theorem and an additional result.

\begin{theorem}\label{thm4}
Suppose $C$ cards are split into $n$ stacks during an iteration. If $s_k$ denotes the stack id, and $d_k$ denote the deck id of Audy's card after $k$ iterations, then,
$$s_{k} = \ceil[\Big]{\dfrac{d_{k-1}}{n}} \hspace{0.2in} \text{for} \hspace{0.1in} k \geq 1$$
\end{theorem}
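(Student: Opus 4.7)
My plan is to mirror the proof of Claim \ref{claim1} almost verbatim, with the constant $3$ replaced by the general parameter $n$. The relationship $s_k \leftrightarrow d_{k-1}$ is structural: a stack id is only created after an iteration has begun, while the deck id describes the state before that iteration starts, so I will first remark that $s_k$ is defined for $k \geq 1$, that $d_k$ is defined for $k \geq 0$, and that the $k$th iteration acts on the deck with positions indexed by $d_{k-1}$. This justifies the indexing in the statement.

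Next I will analyze the dealing process when $C$ cards are laid out face-up into $n$ stacks, one card at a time, cycling through the stacks. The first row across the stacks receives the cards in deck positions $1, 2, \ldots, n$; the second row receives positions $n+1, n+2, \ldots, 2n$; and, inductively, row $r$ receives the cards in positions $n(r-1)+1, n(r-1)+2, \ldots, nr$. Since the stack id $s_k$ is by definition the row number of Audy's card within its stack after the $k$th iteration, and this row is determined by the deck position $d_{k-1}$ just before the iteration, I get the double inequality
\[
n(s_k-1) < d_{k-1} \leq n s_k.
\]

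Dividing through by $n>0$ gives $s_k - 1 < d_{k-1}/n \leq s_k$, which is exactly the characterization of the ceiling function, so $\ceil{d_{k-1}/n} = s_k$, as required. There is essentially no obstacle here; the only subtle point is the bookkeeping about indices (why $s_k$ is paired with $d_{k-1}$ rather than $d_k$), and I will dispense with that in the opening remark. The three ceiling/floor lemmas established earlier are not needed for this theorem, but they will presumably be invoked in what follows when we iterate the recursion and combine it with the reinsertion rule $d_k = jC/n + s_k$.
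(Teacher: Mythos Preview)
Your proposal is correct and follows essentially the same argument as the paper: identify the stack id with the row number, observe that row $r$ receives deck positions $n(r-1)+1,\ldots,nr$, derive $n(s_k-1)<d_{k-1}\le ns_k$, and divide by $n$ to obtain the ceiling characterization. Your added remark about why $s_k$ is paired with $d_{k-1}$ is a welcome clarification that the paper gives only in the special case $n=3$.
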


\begin{proof}
The proof of this theorem is very similar to the earlier proof of Claim \ref{claim1}. We know that the stack id of a card is the ``row number" of the card in the stack. Thus for $k \geq 1$, the first row consists of cards with $d_{k-1} = 1, 2, 3, \ldots, n$. The second row consists of cards with $d_{k-1} = n+1, n+2, \ldots, 2n$, and so on. This means that the $q$th row consists of cards with $d_{k-1} = (q-1)n + 1, (q-1)n + 2, \ldots, (q-1)n + n$. As $q = s_{k}$. Hence, 
\begin{alignat*}{2}
n(s_{k}-1) &< &d_{k-1} &\leq n(s_{k} - 1) + n \\
n(s_{k}-1) &< &d_{k-1} &\leq ns_{k} \\
s_{k}-1 &< &\frac{d_{k-1}}{n} &\leq s_{k}
\end{alignat*}
Therefore, $\ceil[\Big]{\dfrac{d_{k-1}}{n}} = s_{k}$.
\end{proof}

\begin{theorem}\label{thm5}
Suppose $k \geq 1$, then
$$d_k = \Big(\frac{C}{n}\Big)j + s_k$$
\end{theorem}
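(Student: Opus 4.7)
The plan is to prove Theorem \ref{thm5} by a direct counting argument based on how Magi reassembles the deck after the $k$-th iteration. Since $n \mid C$, each of the $n$ stacks created during an iteration contains exactly $C/n$ cards. After Audy identifies the stack containing his card, Magi rebuilds the deck by placing $j$ stacks on top of Audy's stack and the remaining $n-j-1$ stacks below it; this reassembled deck is what produces the deck id $d_k$.

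First, I would count the number of cards sitting above Audy's entire stack in the reassembled deck. Because there are $j$ full stacks of $C/n$ cards each placed above it, this count is precisely $j(C/n)$. Next, I would locate Audy's card within his own stack: by definition, its position from the top of that stack is $s_k$, and this position is preserved when the stack is inserted into the reassembled deck since Magi does not permute the cards within a stack.

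Combining the two observations, the position of Audy's card from the top of the reassembled deck is the number of cards strictly above his card, which equals $j(C/n) + (s_k - 1)$, plus one, giving
\[
d_k = \frac{C}{n}\,j + s_k,
\]
as claimed. I would note in passing that this formula specializes correctly to the $21$CT: with $C=21$, $n=3$, and $j=1$ we recover $d_k = 7 + s_k$, matching the computation used in Section 2.

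There is no real obstacle here — the argument is purely bookkeeping — so the only care needed is to state the hypotheses $n\mid C$, $0 \leq j < n$, and $k \geq 1$ explicitly so that the quantities $C/n$ and $s_k$ are well-defined, and to be clear that stacks are placed on top of (not interleaved with) Audy's stack so that within-stack positions are preserved.
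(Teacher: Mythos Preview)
Your proposal is correct and follows essentially the same counting argument as the paper: each of the $n$ stacks has $C/n$ cards, $j$ of them sit above Audy's stack contributing $(C/n)j$ cards, and Audy's card is at position $s_k$ within its own stack, giving $d_k = (C/n)j + s_k$. Your version is slightly more explicit about the bookkeeping (the $(s_k-1)+1$ step and the note on preserving within-stack order) and adds the $21$CT sanity check, but the underlying reasoning is identical.
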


\begin{proof}
As $C$ cards are being split into $n$ stacks, hence each stack contains $\dfrac{C}{n}$ cards. $s_k$ is the stack id of the desired card after $k$ iterations. $j$ stacks are kept on top of the stack that contains the desired card. There are a total of $\Big(\dfrac{C}{n}\Big)j$ cards in these $j$ stacks. Thus, the new deck id after $k$ iteration is,
$$d_k = \Big(\frac{C}{n}\Big)j +s_k \hspace{0.2in} \text{for} \hspace{0.1in} k \geq 1$$
\end{proof}

\noindent
Note that our goal is to find an exact value for $d_k$. This would tell us the deck id of the desired card which would successfully conclude the trick. Due to Theorem \ref{thm5}, we now have an expression for $d_k$ that we can investigate further. This results in the next theorem which will be used extensively in the next section.

\begin{theorem}\label{thm6}
Suppose $\dfrac{C}{n} = m$ and $k \geq 1$. Then,
$$d_k = 
\begin{cases}
mj + \ceil[\Bigg]{\dfrac{mjn\Big(\dfrac{n^{k-1}-1}{n-1}\Big) + d_0}{n^k}}, & \text{for} \hspace{0.1in} n > 1 \\
d_0, & \text{for} \hspace{0.1in} n = 1
\end{cases}
$$
\end{theorem}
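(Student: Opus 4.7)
The plan is to combine Theorems \ref{thm4} and \ref{thm5} to obtain the one-step recursion
$$d_k \;=\; mj + s_k \;=\; mj + \ceil*{\frac{d_{k-1}}{n}} \qquad (k \geq 1),$$
and then unfold it into the claimed closed form by induction on $k$. The $n = 1$ case is immediate: the constraint $0 \leq j < n$ forces $j = 0$, and $\lceil d_{k-1}/1 \rceil = d_{k-1}$, so the recursion collapses to $d_k = d_{k-1}$, giving $d_k = d_0$ by trivial induction.

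For $n > 1$, I would induct on $k$. The base case $k = 1$ is just $d_1 = mj + \lceil d_0/n \rceil$, which matches the stated formula because the factor $(n^{k-1} - 1)/(n-1)$ vanishes at $k = 1$. For the inductive step, substituting the inductive hypothesis into the recursion yields
$$d_{k+1} \;=\; mj + \ceil*{\frac{mj + \ceil*{\dfrac{mjn\left(\dfrac{n^{k-1}-1}{n-1}\right) + d_0}{n^k}}}{n}}.$$

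The main obstacle is this nested ceiling, which is precisely what Lemma \ref{lem3} is designed to resolve: applying it with integer $mj$, positive integer denominator $n$, and $x$ equal to the argument of the inner ceiling, I may drop the inner $\lceil\cdot\rceil$ without changing the value. After clearing the compound fraction over the common denominator $n^{k+1}$, matching the target formula at level $k+1$ reduces to the single algebraic identity
$$mj \cdot n^k \;+\; mjn \cdot \frac{n^{k-1} - 1}{n-1} \;=\; mjn \cdot \frac{n^k - 1}{n-1},$$
which is a one-line geometric-series manipulation: multiplying through by $n-1$, both sides equal $mj(n^{k+1} - n)$. This closes the induction and completes the proof.
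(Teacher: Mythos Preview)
Your proposal is correct and follows essentially the same route as the paper: induction on $k$ using the recursion $d_k = mj + \lceil d_{k-1}/n\rceil$ from Theorems~\ref{thm4} and~\ref{thm5}, with Lemma~\ref{lem3} to dissolve the nested ceiling and the same geometric-series identity to finish the inductive step. The $n=1$ case is handled identically as well.
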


\begin{proof}
\underline{Case 1 :} ($n>1$)
We will use mathematical induction on $k$ to prove this result. 

\noindent
For $k=1$, L.H.S. = $d_1 = mj + s_1$ from Theorem \ref{thm5}. Thus, $d_1 = mj + \ceil[\Big]{\dfrac{d_0}{n}}$.

\noindent
R.H.S. = $mj + \ceil[\Bigg]{\dfrac{mjn\Big(\dfrac{n^{0}-1}{n-1}\Big) + d_0}{n}} = mj + \ceil[\Big]{\dfrac{d_0}{n}}$. Hence, the result is true for $k=1$. Now assume the result is true for some $k = t \geq 1$. Thus,
$$d_t = mj + \ceil[\Bigg]{\frac{mjn\Big(\dfrac{n^{t-1}-1}{n-1}\Big) + d_0}{n^t}}$$
Now,
$$d_{t+1} = mj + s_{t+1} = mj + \ceil[\Big]{\dfrac{d_t}{n}} = mj + \ceil[\Bigg]{\dfrac{mj + \ceil[\Big]{\frac{mjn\Big(\frac{n^{t-1}-1}{n-1}\Big) + d_0}{n^t}}}{n}}$$
As $mj$ and $n$ are integers, hence by Lemma \ref{lem3}, 
$$\ceil[\Bigg]{\dfrac{mj + \ceil[\Big]{\frac{mjn\Big(\frac{n^{t-1}-1}{n-1}\Big) + d_0}{n^t}}}{n}} = \ceil[\Bigg]{\dfrac{mj + \frac{mjn\Big(\frac{n^{t-1}-1}{n-1}\Big) + d_0}{n^t}}{n}} = \ceil[\Bigg]{\dfrac{mjn^t + \Big(\frac{mjn^t-mjn}{n-1}\Big) +d_0}{n^{t+1}}}$$
Now,
$$\ceil[\Bigg]{\dfrac{mjn^t + \Big(\dfrac{mjn^t-mjn}{n-1}\Big) +d_0}{n^{t+1}}} = \ceil[\Bigg]{\dfrac{\Big(\dfrac{mjn^{t+1}-mjn}{n-1}\Big) +d_0}{n^{t+1}}} = \ceil[\Bigg]{\dfrac{mjn\Big(\dfrac{n^{t}-1}{n-1}\Big) +d_0}{n^{t+1}}}$$
Therefore,
$$d_{t+1} = mj + \ceil[\Bigg]{\dfrac{mjn\Big(\dfrac{n^{(t+1)-1}-1}{n-1}\Big) +d_0}{n^{(t+1)}}}$$
and thus the result is true for $k = t+1$. Therefore, by induction, the result is true for all $k \geq 1$.

\noindent
\underline{Case 2 :} ($n=1$) As $n=1$, and $0 \leq j < n$, hence $j = 0$. By Theorem \ref{thm5}, 
$$d_k = s_k \qquad for \hspace{0.1in} k \geq 1$$
Now, according to Theorem \ref{thm4}, $s_k = \ceil{d_{k-1}} = d_{k-1}$ for $k \geq 1$. Therefore,
$$d_k = d_{k-1} \qquad for \hspace{0.1in} k \geq 1$$
Solving this recurrence relation, we get $d_k = d_0$.
\end{proof}

\section{Main Result}
\noindent
The 21CT and the results that we have seen so far pushes us to the more general question. 
\begin{framed}
\noindent
\begin{center}\label{gen-ques}
\underline{\textbf{General Question}}
\end{center}
Given integers $C, n,$ and $j$, does there exist a positive integer $k$, such that the trick $(C, n, j, k)$ is solvable? In that case, what is $(C, n, j, k)$?
\end{framed}

\noindent
We answer this question using an algorithm. We will then look at the proof of the results in our algorithm.

\begin{framed}\label{algorithm}
\begin{enumerate}
\item Start with given integers $C, n, j$.
\item Find $m = \dfrac{C}{n}$ and $b = \dfrac{mj}{n-1}$ (for $n>1$). 
\item If $C = 1$, then $(1, 1, 0, k)$ is solvable for any $k \geq 1$. In this case, $(1, 1, 0, k) = 1$. 
\item If $C>1$ and $n = 1$, then $(C, 1, 0, k)$ is not solvable for any $k \geq 1$.
\item If $C, n > 1$, then $(C, n, 0, k)$ is solvable for any integer $k \geq \log_nC$. In this case, $(C, n, 0, k) = 1$.
\item If $C, n > 1$, then $(C, n, n-1, k)$ is solvable for any integer $k > \log_n(C-1)$. In this case, $(C, n, n-1, k) = C$.
\item If $C, n > 1$ and $0<j<n-1$, then $(C, n, j, k)$ is solvable if, and only if, $(n-1) \not\mid mj$. In this case, $(C, n, j, k)$ is solvable for any integer $k > \log_nt$, where $t = \max\Big\{\dfrac{C-bn}{1-\{b\}}, \dfrac{bn-1}{\{b\}}\Big\}$ and, $(C, n , j, k) = mj + \floor{b} + 1$. 
\end{enumerate}
\end{framed}

\noindent
The steps (3) - (7) of the above algorithm gives a complete answer to the general question posed at the beginning of this section. We will prove each step in the form of a theorem, culminating with proof of the all important Step (7).

\begin{theorem} (Step (3)) \label{thm:step3}
If $C = 1$, then $(1, 1, 0, k)$ is solvable for any $k \geq 1$ with $(1, 1, 0, k) = 1$.
\end{theorem}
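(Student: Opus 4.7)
The plan is to observe that the case $C=1$ is essentially trivial and is already handled by the machinery developed in Theorem \ref{thm6}. With only one card in the deck, Audy has no choice in which card to remember, so his card is the unique card, forcing the initial deck id to satisfy $d_0 = 1$. The parameters $(C,n,j) = (1,1,0)$ are consistent with the constraints in the algorithm since $n \mid C$, $0 \leq j < n$, and $n \leq C$ all hold.

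First I would verify that the 4-tuple $(1,1,0,k)$ is a legal trick for any $k \geq 1$: splitting a deck of $1$ card into $n=1$ stack of size $m = C/n = 1$ is a well-defined iteration, and placing $j=0$ stacks above the chosen stack simply returns the single card to the top of the deck. Hence iterations can be carried out for any $k \geq 1$.

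Next I would invoke Theorem \ref{thm6}, Case 2, which gives $d_k = d_0$ whenever $n = 1$. Since $d_0 = 1$ here, this immediately yields $d_k = 1$ for every $k \geq 1$, so the trick is solvable and $(1,1,0,k) = 1$.

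There is no genuine obstacle in this proof; the only thing to be careful about is confirming that the parameter constraints from Section \ref{sec3} are satisfied and that the $n=1$ branch of Theorem \ref{thm6} applies directly. The whole argument should occupy only a few lines, essentially a citation of Theorem \ref{thm6} combined with the observation that $d_0 = 1$ is forced when $C=1$.
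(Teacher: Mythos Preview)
Your proposal is correct and follows essentially the same approach as the paper: both invoke the $n=1$ case of Theorem \ref{thm6} to obtain $d_k = d_0$, then use $1 \leq d_0 \leq C = 1$ to force $d_0 = 1$. The paper's version is slightly terser and also explicitly derives $n=1$ and $j=0$ from the constraints $0 < n \leq C$ and $0 \leq j < n$, but the substance is identical.
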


\begin{proof}
This is a trivial case as there is only 1 card. As $0 < n \leq C$, therefore, $n=1$. Also, as $j < n$, hence $j = 0$. By Theorem \ref{thm6}, $d_k = d_0$. We know that the initial deck id $d_0$ of the desired card is between $1$ and $C$. Thus, $1 \leq d_0 \leq 1$. Therefore, $d_k = d_0 = 1$. Thus $(1, 1, 0, k)$ is solvable for any $k \geq 1$, with $(1, 1, 0, k) = 1$.
\end{proof}

\begin{theorem} (Step (4)) \label{thm:step4}
If $C>1$ and $n = 1$, then $(C, 1, 0, k)$ is not solvable for any $k \geq 1$.
\end{theorem}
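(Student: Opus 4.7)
The plan is to reduce everything to the identity $d_k = d_0$ supplied by Theorem \ref{thm6} in its $n=1$ branch. Since the constraint $0 \leq j < n = 1$ forces $j = 0$, the hypotheses of that branch are met, and the theorem tells us that $d_k = d_0$ for every $k \geq 1$. Intuitively this is clear: splitting into a single stack and putting zero stacks on top of it simply returns the deck unchanged, so no iteration extracts any information about which card Audy has chosen.

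With that in hand, I would next unpack what it means for $(C,1,0,k)$ to be solvable. The definition asks for the existence of an integer $l$ such that, after $k$ iterations, Audy's card is guaranteed to be at deck position $l$, regardless of which of the $C$ cards he originally remembered. In other words, one needs a single value $l$ with $d_k = l$ for \emph{every} admissible initial deck id $d_0 \in \{1, 2, \ldots, C\}$.

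The conclusion then follows by a one-line contradiction. Since $C > 1$, the starting position $d_0$ can take at least two distinct values, say $d_0 = 1$ and $d_0 = 2$. If a valid $l$ existed, the identity $d_k = d_0$ from Theorem \ref{thm6} would force $l = 1$ and $l = 2$ simultaneously, which is impossible. Hence no such $l$ exists, and $(C, 1, 0, k)$ is not solvable for any $k \geq 1$.

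There is no real obstacle here; the only care needed is to be explicit about the universal quantification over $d_0$ that is built into the definition of solvability, so that the impossibility is justified rather than merely asserted. The proof should occupy only a few lines once Theorem \ref{thm6} is cited.
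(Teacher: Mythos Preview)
Your proposal is correct and follows essentially the same route as the paper: both invoke Theorem~\ref{thm6} in its $n=1$ branch to obtain $d_k=d_0$, then use $C>1$ to conclude that no single value of $d_k$ can work. Your version is in fact a bit more explicit than the paper's, which simply observes $1\le d_k\le C$ and asserts that ``there is no specific value of $d_k$''; your choice of the two witnesses $d_0=1$ and $d_0=2$ makes the implicit universal quantification over $d_0$ fully transparent.
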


\begin{proof}
By Theorem \ref{thm6}, $d_k = d_0$, as $n=1$. But $1 \leq d_0 \leq C$. Thus,
$$1 \leq d_k \leq C \hspace{0.3in} \text{ for } \hspace{0.1in} k \geq 1$$
As $C>1$, there is no specific value of $d_k$ that we can find for any $k$. Thus, the trick $(C, 1, 0, k)$ is not solvable.
\end{proof}

\begin{theorem} (Step (5)) \label{thm:step5}
If $C, n > 1$, then $(C, n, 0, k)$ is solvable for any integer $k \geq \log_nC$. In this case, $(C, n, 0, k) = 1$. 
\end{theorem}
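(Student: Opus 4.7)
The plan is to specialize the general formula from Theorem \ref{thm6} to the case $j = 0$ and then determine when the resulting expression is forced to take a single value independent of $d_0$. Since $C, n > 1$, Case 1 of Theorem \ref{thm6} applies, and setting $j = 0$ kills every $mj$ term in the numerator, yielding
$$d_k = \ceil[\Bigg]{\frac{d_0}{n^k}}.$$

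Next, I would observe that $\ceil{d_0/n^k} = 1$ precisely when $0 < d_0 \leq n^k$. Magi does not know $d_0$ in advance; all he knows is $1 \leq d_0 \leq C$. For the trick to be solvable, $d_k$ must take the same value no matter which $d_0$ in this range occurs. Thus I would choose $k$ large enough that $n^k \geq C$, which guarantees $d_0 \leq C \leq n^k$ and hence $d_k = 1$ uniformly. Using Lemma \ref{lem1} to handle the ceiling monotonically, the condition $n^k \geq C$ rearranges to $k \geq \log_n C$.

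I would conclude by observing that for any integer $k \geq \log_n C$, the deck id after $k$ iterations is forced to equal $1$ regardless of the initial position, so $(C, n, 0, k)$ is solvable and $(C, n, 0, k) = 1$. The argument is essentially a direct calculation from Theorem \ref{thm6}; the only conceptual point worth emphasizing is that ``solvable'' requires $d_k$ to be determined without reference to the unknown $d_0$, which is what forces the worst-case bound $C \leq n^k$ rather than merely $d_0 \leq n^k$.
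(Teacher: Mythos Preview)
Your proposal is correct and follows essentially the same route as the paper: specialize Theorem~\ref{thm6} to $j=0$ to obtain $d_k = \ceil{d_0/n^k}$, then use $1 \leq d_0 \leq C$ together with $k \geq \log_n C$ (i.e., $n^k \geq C$) to force $d_k = 1$ independently of $d_0$. The only cosmetic difference is that the paper sandwiches $d_k$ between $\ceil{1/n^k}$ and $\ceil{C/n^k}$ and shows both equal $1$, whereas you directly characterize when $\ceil{d_0/n^k} = 1$; the underlying argument is the same.
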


\begin{proof}
From Theorem \ref{thm6} with $j = 0$, we have 
$$d_k = \ceil[\Big]{\frac{d_0}{n_k}}$$
As $1 \leq d_0 \leq C$, therefore, 
\begin{alignat*}{2}
\dfrac{1}{n^k} &\leq & \dfrac{d_0}{n^k} \, \,&\leq \dfrac{C}{n^k} \\
\ceil[\Big]{\dfrac{1}{n^k}} &\leq & \, \ceil[\Big]{\dfrac{d_0}{n^k}} &\leq \ceil[\Big]{\dfrac{C}{n^k}} 
\end{alignat*}
As $n>1$, therefore $0 < \dfrac{1}{n^k} < 1$. Hence,
\begin{alignat*}{2}
1 &\leq & \, d_k & \leq \ceil[\Big]{\dfrac{C}{n^k}} \numberthis \label{step5:ineq1}
\end{alignat*}

\noindent
Now, $k \geq \log_nC$ and $C>1$. This implies, 
\begin{align*}
n^k &\geq C \\
0 < & \frac{C}{n^k}  \leq 1
\end{align*}
Therefore, $\ceil[\Big]{\dfrac{C}{n^k}} = 1$. Using this result in inequality \eqref{step5:ineq1}, we have
$$1 \leq d_k \leq 1$$
Thus, $d_k = 1$. This shows that after $k$ iterations, we have a specific value for the the deck id $d_k$, and this value is the position of the desired card which happens to be the first card in the deck. Hence, the trick $(C, n, 0, k)$ is solvable for any integer $k \geq \log_nC$, and $(C, n, 0, k) = 1$. 
\end{proof}

\begin{theorem} (Step (6)) \label{thm:step6}
If $C, n > 1$, then $(C, n, n-1, k)$ is solvable for any integer $k > \log_n(C-1)$. In this case, $(C, n, n-1, k) = C$. 
\end{theorem}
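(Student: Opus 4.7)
The plan is to specialize Theorem \ref{thm6} to $j = n-1$ and simplify the resulting ceiling expression. First I would substitute $j = n-1$ into the formula for $d_k$, observing that the factor $(n-1)$ cancels with the denominator $(n-1)$ appearing in the geometric sum, so that
$$m(n-1)\cdot n \cdot \frac{n^{k-1}-1}{n-1} = mn^k - mn = mn^k - C,$$
using $mn = C$. Pulling the integer $mn^k$ out of the ceiling by Lemma \ref{lem2} and absorbing the leading $m(n-1) + m = mn = C$ term, I expect to arrive at the compact form
$$d_k = C + \ceil[\Big]{\dfrac{d_0 - C}{n^k}}.$$

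Next, using $1 \leq d_0 \leq C$, I would show that the ceiling term equals $0$ precisely when $k$ is large enough. The inequality $(d_0 - C)/n^k \leq 0$ is automatic, while $(d_0 - C)/n^k > -1$ is equivalent to $n^k > C - d_0$. The worst case occurs at $d_0 = 1$, giving the condition $n^k > C - 1$, i.e., $k > \log_n(C-1)$, which is exactly the hypothesis. Hence under this hypothesis $\ceil{(d_0-C)/n^k} = 0$ for every admissible $d_0$, and therefore $d_k = C$ regardless of $d_0$, showing solvability with $(C,n,n-1,k) = C$.

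The only place that requires any care is the algebraic simplification in the first step, where one must correctly cancel the $(n-1)$ factors and apply Lemma \ref{lem2} to extract the integer part $m$ from inside the ceiling; there is no real obstacle, just bookkeeping. The argument then parallels the proof of Theorem \ref{thm:step5} (Step 5), with the roles of $d_0 = 1$ and $d_0 = C$ swapped: rather than squeezing $d_k$ between $1$ and $\ceil{C/n^k}$, we squeeze it between $C + \ceil{(1-C)/n^k}$ and $C$, and force both ends to equal $C$ by choosing $k$ large enough.
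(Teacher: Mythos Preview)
Your proposal is correct and follows essentially the same route as the paper: specialize Theorem~\ref{thm6} at $j=n-1$, cancel the $(n-1)$ factors, use Lemma~\ref{lem2} to extract the integer $m$ so that $d_k = C + \ceil{(d_0 - C)/n^k}$, and then bound the ceiling term using $1 \le d_0 \le C$ together with $n^k > C-1$. The only imprecision is the phrase ``pulling the integer $mn^k$ out of the ceiling''---what you actually pull out via Lemma~\ref{lem2} is $m$, after the numerator $mn^k$ is divided by $n^k$---but your subsequent line $m(n-1)+m = mn = C$ makes clear you have this right.
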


\begin{proof}
Here $j = n-1$. Therefore, by Theorem \ref{thm6},
\begin{equation*}
\begin{split}
d_k = m(n-1) + \ceil[\Bigg]{\frac{m(n-1)n\Big(\dfrac{n^{k-1}-1}{n-1}\Big) + d_0}{n^k}} & = m(n-1) + \ceil[\Bigg]{\frac{m(n^k-n)}{n^k} + \frac{d_0}{n^k}} \\
& = m(n-1) + \ceil[\Bigg]{m + \frac{d_0 - mn}{n^k}}
\end{split}
\end{equation*}
As $m \in \mZ$, therefore, by Lemma \ref{lem2}, 
\begin{equation*}
\begin{split}
d_k = m(n-1) + \ceil[\Bigg]{m + \frac{d_0 - mn}{n^k}} &= m(n-1) + m + \ceil[\Bigg]{\frac{d_0 - mn}{n^k}} \\
&= mn + \ceil[\Bigg]{\frac{d_0 - C}{n^k}} \hspace{0.2in} \text{as } C = mn.
\end{split}
\end{equation*}
$d_0$ is the initial deck id before any iteration. Hence $1 \leq d_0 \leq C$. This implies,
\begin{alignat*}{2}
1-C &\leq & \, d_0 - C \,\,\, &\leq 0  \\
\ceil[\Bigg]{\frac{1-C}{n^k}} &\leq & \, \ceil[\Bigg]{\frac{d_0 - C}{n^k}} &\leq 0 \numberthis \label{step6:ineq1}
\end{alignat*}

\noindent
We are told that $k > \log_n(C-1)$, hence $n^k >C-1$. This implies, 
\begin{equation*}
\begin{split}
\frac{1-C}{n^k} &> -1 \\
\ceil[\Bigg]{\frac{1-C}{n^k}} & > -1
\end{split}
\end{equation*}
This along with inequality $\ceil[\Bigg]{\dfrac{1-C}{n^k}} \leq 0$ from \eqref{step6:ineq1}, implies that $\ceil[\Bigg]{\dfrac{1-C}{n^k}} = 0$. Thus,
$$0 \leq \ceil[\Bigg]{\frac{d_0 - C}{n^k}} \leq 0$$
which means $\ceil[\Bigg]{\dfrac{d_0 - C}{n^k}} = 0$. Therefore,
$$d_k = mn = C$$
This shows that after $k$ iterations, we have a specific value of the deck id $d_k$, and this value is the position of the desired card from the top of the deck. Thus, the trick $(C, n, n-1, k)$ is always solvable for $k > \log_n(C-1)$, and $(C, n, n-1, k) = C$.


\end{proof}

\begin{theorem} (Step (7)) \label{thm:step7}
Suppose $m = \dfrac{C}{n}$ and $b = \dfrac{mj}{n-1}$. If $C, n > 1$ and $0<j<n-1$, then $(C, n, j, k)$ is solvable if, and only if, $(n-1) \nmid mj$. In this case, $(C, n, j, k)$ is solvable for any integer $k > \log_nt$, where $t = \max\Big\{\dfrac{C-bn}{1-\{b\}}, \dfrac{bn-1}{\{b\}}\Big\}$ and, $(C, n , j, k) = mj + \floor{b} + 1$. 
\end{theorem}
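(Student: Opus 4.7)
The plan is to start from the closed form in Theorem~\ref{thm6} and to reduce the problem to a question about when a single ceiling expression is forced to take the same value for every admissible starting position $d_0 \in \{1, 2, \ldots, C\}$. A direct manipulation of the argument of the ceiling in Theorem~\ref{thm6}, combining the geometric-sum term with $d_0/n^k$ over the common denominator $(n-1)n^k$ and then recognising $b = mj/(n-1)$, rewrites that expression as $b + (d_0 - bn)/n^k$. Thus
\[
d_k = mj + \left\lceil b + \frac{d_0 - bn}{n^k} \right\rceil,
\]
and solvability amounts to the right-hand side being independent of $d_0 \in \{1,\ldots,C\}$. The virtue of this rewriting is that $b$ depends only on $(C,n,j)$, while the second summand shrinks with $k$, so the problem is now governed entirely by the fractional part $\{b\}$.

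Next I would split into the two cases suggested by the divisibility condition. If $(n-1) \mid mj$, then $b \in \mZ$, and Lemma~\ref{lem2} gives $d_k = mj + b + \lceil (d_0-bn)/n^k \rceil$. Choosing $d_0 = bn$ produces ceiling value $0$, whereas $d_0 = bn + 1$ produces ceiling value $1$, so $d_k$ fails to be uniquely determined for any $k$. A short side computation (using $0 < j < n-1$, which forces $n \ge 3$, together with $b \ge 1$) confirms that both choices lie in $\{1,\ldots,C\}$, yielding the ``only if'' direction.

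Conversely, assume $(n-1) \nmid mj$, so $0 < \{b\} < 1$. Applying Lemma~\ref{lem2} to peel off the integer part of $b$ yields
\[
d_k = mj + \lfloor b \rfloor + \left\lceil \{b\} + \frac{d_0-bn}{n^k}\right\rceil,
\]
and solvability reduces to forcing $0 < \{b\} + (d_0-bn)/n^k \le 1$ for every admissible $d_0$. The upper bound is tightest at $d_0 = C$ and rearranges to $n^k \ge (C-bn)/(1-\{b\})$; the strict lower bound is tightest at $d_0 = 1$ and rearranges to $n^k > (bn-1)/\{b\}$. Both constraints hold simultaneously exactly when $n^k > t$, equivalently $k > \log_n t$, in which case the resulting common value is $d_k = mj + \lfloor b \rfloor + 1$, as claimed.

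The main obstacle I anticipate is purely bookkeeping rather than conceptual. The algebraic simplification of the geometric-sum term is routine but easy to mis-copy; in the integer-$b$ case one must carefully check that $bn$ and $bn+1$ are genuine elements of $\{1,\ldots,C\}$ so that the counterexample is legitimate; and in the non-integer case one must argue that the extreme choices $d_0=1$ and $d_0=C$ really are the binding constraints, keeping strict and non-strict inequalities straight so that the emerging threshold is exactly $\max\{(C-bn)/(1-\{b\}),\,(bn-1)/\{b\}\}$ rather than an off-by-one variant.
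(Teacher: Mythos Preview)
Your proposal is correct and follows essentially the same route as the paper: both rewrite Theorem~\ref{thm6} as $d_k = mj + \lceil b + (d_0-bn)/n^k\rceil$, peel off $\lfloor b\rfloor$ via Lemma~\ref{lem2}, and then bound the remaining ceiling using the extremal values of $d_0$. The only noteworthy variation is in the non-solvable direction, where you exhibit the explicit witnesses $d_0=bn$ and $d_0=bn+1$ (which indeed lie in $\{1,\dots,C\}$ since $b\ge 1$ is an integer and $b<m$ forces $bn\le C-n$), whereas the paper instead argues from the endpoints $d_0=1$ and $d_0=C$ that the ceiling takes both a nonpositive and a strictly positive value; both arguments are equally valid.
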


\begin{proof}
Before we proceed further, we first verify that each of these logarithms are defined. As $0 < j < n-1$, hence,
\begin{align*}
mj & < m(n-1) \\
b(n-1) & < m(n-1) \\
b &< m \qquad \qquad \text{ as } \hspace{0.1in} n > 1 \\
bn &< mn \\
C &> bn \qquad \qquad \text{ as } \hspace{0.1in} C = mn
\end{align*}
Also, when the trick is solvable then $b \not\in \mZ$. So, $0 < \{b\} < 1$, hence $1 - \{b\} > 0$. Thus $\dfrac{C-bn}{1-\{b\}} > 0$, and hence, $\log_n\Big(\dfrac{C-bn}{1-\{b\}} \Big)$ is defined. For the other logarithm, we know that $C \geq n$, thus, $C > n-1$, and since $0 < j < n-1$, hence,
\begin{align*}
Cj &> n-1 \\
\frac{Cj}{n-1} &> 1 \\
\frac{mnj}{n-1} &>1 \\
bn & > 1 \qquad \qquad \text{ as } \hspace{0.1in} \frac{mj}{n-1} = b
\end{align*}
As $\{b\} > 0$, hence, $\dfrac{bn-1}{\{b\}} > 0$, and thus, $\log_n\Big(\dfrac{bn-1}{\{b\}}\Big)$ is defined.

\noindent
Now we come back to the main proof. 
\begin{center}
\underline{\textbf{Part 1}}
\end{center}
We first show that if $(n-1) \nmid mj$ then $(C, n, j, k)$ is solvable. As $(n-1) \nmid mj$, thus $b \not\in \mZ$. Hence $0<\{b\} < 1$. From Theorem \ref{thm6}, we have,
\begin{align*}
d_k = mj + \ceil[\Bigg]{\frac{b(n-1)n\Big(\dfrac{n^{k-1}-1}{n-1}\Big) + d_0}{n^k}} & = mj + \ceil[\Bigg]{\frac{b(n^k-n)}{n^k} + \frac{d_0}{n^k}} \\
& = mj + \ceil[\Bigg]{b + \frac{d_0 - bn}{n^k}}
\end{align*}
As $b = \floor{b} + \{b\}$ and $\floor{b} \in \mZ$, hence, by Lemma \ref{lem2},
$$d_k = mj + \floor{b} + \ceil[\Bigg]{\{b\} + \frac{d_0 - bn}{n^k}}$$
As usual, $1 \leq d_0 \leq C$. Thus, 
\begin{alignat*}{2}
\{b\} + \frac{1 - bn}{n^k} &\leq &\{b\} + \frac{d_0 - bn}{n^k} \, \, \, &\leq \{b\} + \frac{C - bn}{n^k} \numberthis \label{step7:ineq1}\\
\text{Hence, } \qquad \ceil[\Bigg]{\{b\} + \frac{1 - bn}{n^k}} &\leq &\ceil[\Bigg]{\{b\} + \frac{d_0 - bn}{n^k}} &\leq \ceil[\Bigg]{\{b\} + \frac{C - bn}{n^k}} \numberthis \label{step7:ineq2}
\end{alignat*}
Now suppose $t = \dfrac{C-bn}{1-\{b\}}$. Thus $k > \log_n\Big(\dfrac{C-bn}{1-\{b\}} \Big) \geq \log_n\Big(\dfrac{bn-1}{\{b\}}\Big)$. This implies,
\begin{align*}
n^k &> \frac{C-bn}{1-\{b\}} \qquad \text{ and } \qquad n^k > \frac{bn-1}{\{b\}}\\
\frac{C-bn}{n^k} &<1-\{b\} \qquad \text{ and } \qquad \frac{1-bn}{n^k} > -\{b\}\\
\{b\} + \frac{C-bn}{n^k} &<1 \qquad \qquad \text{ and } \qquad \{b\} + \frac{1-bn}{n^k} > 0\\
\end{align*}
We have already seen that $C>bn$ and $\{b\} >0$. Thus,
$$0 < \{b\} + \frac{C-bn}{n^k} < 1$$
This implies,
\begin{align*}
\ceil[\Bigg]{\{b\} + \frac{C - bn}{n^k}} = 1 \numberthis \label{step7:ineq3}
\end{align*}
From inequality \eqref{step7:ineq1}, we see that, 
$$\{b\} + \frac{1 - bn}{n^k} \leq \{b\} + \frac{C - bn}{n^k} < 1$$
Thus,
$$0 < \{b\} + \frac{1-bn}{n^k} < 1$$ 
This again implies,
\begin{align*}
\ceil[\Bigg]{\{b\} + \frac{1 - bn}{n^k}} = 1 \numberthis \label{step7:ineq4}
\end{align*}
Using results from \eqref{step7:ineq3} and \eqref{step7:ineq4} in inequality \eqref{step7:ineq2}, we have
$$1 \leq \ceil[\Bigg]{\{b\} + \frac{d_0 - bn}{n^k}} \leq 1$$
Therefore,
$$ \ceil[\Bigg]{\{b\} + \frac{d_0 - bn}{n^k}} = 1$$
and $d_k = mj + \floor{b} + 1$. As there is a specific value of $d_k$ after $k$ iterations, thus $(C, n, j, k)$ is solvable in this case with $(C, n, j, k) = mj + \floor{b} + 1$. 

\enspace
\noindent
Now suppose $t = \dfrac{bn-1}{\{b\}}$. Thus $k > \log_n\Big(\dfrac{bn-1}{\{b\}}\Big) \geq \log_n\Big(\dfrac{C-bn}{1-\{b\}} \Big)$. We follow the same argument as before to conclude that 
$$ \ceil[\Bigg]{\{b\} + \frac{d_0 - bn}{n^k}} = 1$$
and $d_k = mj + \floor{b} + 1$. Again, as there is a specific value of $d_k$ after $k$ iterations, thus $(C, n, j, k)$ is solvable in this case with $(C, n, j, k) = mj + \floor{b} + 1$. 


\noindent 
Now we prove the other direction.
\begin{center}
\underline{\textbf{Part 2}}
\end{center}
We have to prove that for $C, n>1$ and $0<j<n-1$, if $(C, n, j, k)$ is solvable then $(n-1) \nmid mj$. We will however prove the contrapositive of this statement as they are equivalent. Hence we will show that for $C, n>1$ and $0<j<n-1$, if $(n-1) \mid mj$, then $(C, n, j, k)$ is not solvable.

\noindent
As $(n-1) \mid mj$, thus $b \in \mZ$. Using Theorem \ref{thm6},
\begin{align*}
d_k = mj + \ceil[\Bigg]{\frac{b(n-1)n\Big(\dfrac{n^{k-1}-1}{n-1}\Big) + d_0}{n^k}} & = mj + \ceil[\Bigg]{\frac{b(n^k-n)}{n^k} + \frac{d_0}{n^k}} \\
& = mj + \ceil[\Bigg]{b + \frac{d_0 - bn}{n^k}}\\
& = mj + b + \ceil[\Bigg]{\frac{d_0 - bn}{n^k}} \text{ as } b \in \mZ
\end{align*}
We know that $1 \leq d_0 \leq C$. Thus,
\begin{alignat*}{2}
\frac{1 - bn}{n^k} &\leq &\frac{d_0 - bn}{n^k} \, \, \, &\leq \frac{C - bn}{n^k} \numberthis \label{step7:ineq5}\\
\text{Hence, } \qquad \ceil[\Bigg]{\frac{1 - bn}{n^k}} &\leq &\ceil[\Bigg]{\frac{d_0 - bn}{n^k}} &\leq \ceil[\Bigg]{\frac{C - bn}{n^k}} \numberthis \label{step7:ineq6}
\end{alignat*}
We have already seen that $C > bn$. Thus,
\begin{align*}
\frac{C-bn}{n^k} &> 0 \\
\ceil[\Bigg]{\frac{C-bn}{n^k}} & > 0
\end{align*}
Similarly, we have also seen that $bn > 1$. Hence,
\begin{align*}
\frac{1-bn}{n^k} &< 0 \\
\ceil[\Bigg]{\frac{1-bn}{n^k}} & \leq 0
\end{align*}
As $\ceil[\Bigg]{\dfrac{C-bn}{n^k}} \in \{1, 2, 3, \ldots\}$ and $\ceil[\Bigg]{\dfrac{1-bn}{n^k}} \in \{0, -1, -2, \ldots \}$, hence from inequality \eqref{step7:ineq6}, $\ceil[\Bigg]{\dfrac{d_0 - bn}{n^k}}$ does not yield any specific integer for any $k$. Thus, $d_k = mj + b + \ceil[\Bigg]{\dfrac{d_0 - bn}{n^k}}$ is also not a specific integer for any $k$. Therefore, $(C, n, j, k)$ is not solvable.

\end{proof}

Note that from theorems \ref{thm:step3}, \ref{thm:step5}, \ref{thm:step6}, \ref{thm:step7}, we see that the value of a solvable trick $(C, n, j, k)$ does not depend on $k$. Hence, two solvable tricks $(C_1, n_1, j_1, k_1)$ and $(C_2, n_2, j_2, k_2)$ will be considered the \textbf{same trick} if $C_1 = C_2, n_1 = n_2,$ and $j_1 = j_2$.

\section{Other Interesting Results}

\noindent
Now that we have a complete mathematical understanding of the trick, we ask ourselves some interesting questions. 
\begin{enumerate}
\item Given a number of cards $C$, how how many solvable magic tricks are there?
\item How does our algorithm solve the 21CT?
\item Assuming $C, n > 1$, what choices of $C$ and $n$ will guarantee that $(C, n, j, k)$ is solvable for all $0 \leq j \leq n-1$ and an appropriate $k$.
\item Assuming $C, n > 1$, what choices of $C$ and $n$ will guarantee that $(C, n, j, k)$ is not solvable for any $0 < j < n-1$.
\end{enumerate}
We answer these questions in the form of a theorem and several corollaries. 
\begin{theorem}\label{thm:interestingresult1}
For a given number of cards $C \geq 1$, the number of solvable magic tricks $p$, is given by 
$$p =
\begin{cases}
1, & C = 1 \\
\sum\limits_{\substack{n>1 \\ n \mid C }} \Big(n+1 - \gcd\Big(\frac{C}{n}, n-1\Big)\Big) & C>1, n>1
\end{cases}
$$
\end{theorem}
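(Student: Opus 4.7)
The plan is to partition the set of solvable tricks $(C,n,j,k)$ by the divisor $n$ and, for each admissible $n$, count the admissible values of $j$. First, I would handle the trivial case $C = 1$ directly: the constraints $0 < n \leq C$ and $0 \leq j < n$ force $n = 1$ and $j = 0$, and Theorem~\ref{thm:step3} shows that $(1,1,0,k)$ is the unique solvable trick, giving $p = 1$.

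For $C > 1$, Theorem~\ref{thm:step4} eliminates $n = 1$ entirely, and the standing assumption $n \mid C$ restricts the outer index to divisors of $C$ with $n > 1$. Fix such an $n$ and let $m = C/n$. I would split the $n$ possible values of $j \in \{0,1,\ldots,n-1\}$ into three blocks: the boundary case $j = 0$ (always solvable by Theorem~\ref{thm:step5}, contributing $1$), the boundary case $j = n-1$ (always solvable by Theorem~\ref{thm:step6}, contributing another $1$), and the interior $0 < j < n-1$, for which Theorem~\ref{thm:step7} declares the trick solvable precisely when $(n-1) \nmid mj$. The desired formula will follow once the count of solvable interior $j$ is shown to equal $n - 1 - \gcd(m,n-1)$.

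The heart of the argument is therefore this interior count, which I expect to be the main obstacle. Setting $d = \gcd(m, n-1)$, I would invoke the standard divisibility fact that $(n-1) \mid mj$ iff $\tfrac{n-1}{d} \mid j$. The multiples of $\tfrac{n-1}{d}$ in $\{1,2,\ldots,n-1\}$ are exactly the $d$ values $\tfrac{n-1}{d}, \tfrac{2(n-1)}{d}, \ldots, \tfrac{d(n-1)}{d} = n-1$, and removing the endpoint $j = n-1$ (already tallied in the boundary block) leaves $d-1$ interior $j$ for which the trick fails; hence $(n-2) - (d-1) = n - 1 - \gcd(m,n-1)$ interior values yield a solvable trick. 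Adding the two boundary contributions gives $n+1-\gcd(C/n,n-1)$ solvable tricks per fixed $n$, and summing over divisors $n > 1$ of $C$ produces the claimed formula.

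Before declaring victory I would sanity-check the degenerate case $n = 2$, where the interior block $\{1,\ldots,n-2\}$ is empty: the interior formula gives $n - 1 - \gcd(m,1) = 0$ and a total of $2$, matching $n + 1 - \gcd(C/n, n-1) = 3 - 1 = 2$. A small worked example such as $C = 6$ (summing over $n \in \{2,3,6\}$) would serve as a final consistency check on the divisor sum.
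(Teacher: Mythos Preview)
Your proposal is correct and follows essentially the same approach as the paper: handle $C=1$ directly, discard $n=1$ via Theorem~\ref{thm:step4}, and for each divisor $n>1$ count the two boundary values $j=0,\,n-1$ (Theorems~\ref{thm:step5} and~\ref{thm:step6}) plus the interior $j$ with $(n-1)\nmid mj$ (Theorem~\ref{thm:step7}), obtaining $n+1-\gcd(C/n,n-1)$ per $n$. The only cosmetic difference is in the interior count: the paper counts multiples of $\lcm(m,n-1)$ among $m,2m,\ldots,(n-2)m$ and uses $m(n-1)/\lcm(m,n-1)=\gcd(m,n-1)$, whereas you reduce $(n-1)\mid mj$ to $\tfrac{n-1}{d}\mid j$ and count directly in the $j$-range---both yield $\gcd(m,n-1)-1$ unsolvable interior values.
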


\begin{proof}
\noindent
\underline{Case 1  ($C=1$):} If $C= 1$, then $n = 1$, therefore, there is only one trick $(1, 1, 0, k)$ which is solvable for any $k >0$ by Theorem \ref{thm:step3}. Thus $p=1$.

\enspace
\noindent
\underline{Case 2  ($C>1$):} If $n = 1$, then there is again only one trick $(C, 1, 0, k)$ which is not solvable for any $k>0$ by Theorem \ref{thm:step4}. Therefore, $p=0$. 
Now suppose $C>1$ and $n>1$. We choose a fixed $n$ that is a divisor of $C$. We will first find out how many tricks are not solvable. By Theorem \ref{thm:step7}, if $0 < j < n-1$, then $(C, n, j, k)$ is not solvable for any $k$, if $(n-1) \mid mj$ where $m = \dfrac{C}{n}$. As $0 < j < n-1$, thus $(n-1) \nmid j$. The possible values of $mj$ are $m, 2m, 3m, \ldots, (n-2)m$. There are $n-2$ such values. We thus need to find out which multiples of $m$ are also multiples of $n-1$ in order for the trick $(C, n, j, k)$ to be not solvable. The answer happens to be the multiples of $\lcm(m, n-1)$ from the definition of the least common multiple. However, we need to find out how many multiples of $\lcm(m, n-1)$ exist in the set $\{m, 2m, 3m, \ldots, (n-2)m\}$. This is given by $\dfrac{m(n-1)}{\lcm(m, n-1)} - 1$. We subtract 1 as $m(n-1)$ is one such multiple that does not belong in the set. From elementary number theory \cite{Burton}, we see that $\dfrac{m(n-1)}{\lcm(m, n-1)} = \gcd(m, n-1)$. Thus, the number of tricks $(C, n, j, k)$ that are not solvable for $0 < j < n-1$ is $\gcd(m, n-1) - 1$. So, the number of tricks that are solvable for $0 < j < n-1$ is $(n-2) - (\gcd(m, n-1) - 1) = n-1 - \gcd(m, n-1)$. Now for $j = 0$ and $j=n-1$ we have seen that $(C, n, j, k)$ is solvable by Theorems \ref{thm:step5} and \ref{thm:step6}. These add 2 more solvable tricks. Hence, the total number of solvable tricks $(C, n, j, k)$ for a fixed $n$, and an appropriate $k$ is 
$$n-1 - \gcd(m, n-1) + 2 = n+1 -\gcd\Big(\frac{C}{n}, n-1\Big)$$
As $n > 1$ cycles through the divisors of $C$, we can see the total number of solvable tricks for a particular $C$ to be 
$$p = \sum\limits_{\substack{n>1 \\ n \mid C }} \Big(n+1 - \gcd\Big(\frac{C}{n}, n-1\Big)\Big)$$
\end{proof}

\begin{corollary}
The 21CT, $(21, 3, 1, 3)$ is solvable.
\end{corollary}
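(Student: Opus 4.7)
The corollary is an immediate specialization of Theorem \ref{thm:step7} (Step (7) of the algorithm), so the plan is simply to verify that $(C, n, j, k) = (21, 3, 1, 3)$ falls into its hypothesis and satisfies its numerical threshold. First I would observe that $C = 21 > 1$, $n = 3 > 1$, and $0 < j = 1 < n - 1 = 2$, so we are precisely in the setting of Theorem \ref{thm:step7} rather than any of Steps (3)--(6).

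Next I would compute the two auxiliary quantities from the algorithm: $m = C/n = 21/3 = 7$ and $b = mj/(n-1) = 7/2$. The solvability criterion in Theorem \ref{thm:step7} is $(n - 1) \nmid mj$, i.e.\ $2 \nmid 7$, which clearly holds, so the trick is solvable in principle. This already gives the qualitative conclusion; the remaining step is to check that $k = 3$ is large enough.

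To verify $k > \log_n t$, I would plug in $\lfloor b \rfloor = 3$, $\{b\} = 1/2$, and compute
\[
\frac{C - bn}{1 - \{b\}} = \frac{21 - (7/2)\cdot 3}{1/2} = \frac{21/2}{1/2} = 21, \qquad \frac{bn - 1}{\{b\}} = \frac{21/2 - 1}{1/2} = 19,
\]
so $t = \max\{21, 19\} = 21$ and $\log_3 21 < \log_3 27 = 3$. Hence $k = 3 > \log_3 21$, which fulfills the threshold in Theorem \ref{thm:step7}. I would then record, as a sanity check consistent with the worked example in Section 1, that the theorem also predicts $(21, 3, 1, 3) = mj + \lfloor b \rfloor + 1 = 7 + 3 + 1 = 11$.

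There is really no obstacle here: the only thing that could conceivably go wrong is the inequality $\log_3 21 < 3$, so the mild ``hard part'' is just confirming that $21 < 27 = 3^3$, which is immediate. The corollary therefore follows in one line once the parameters are matched against the hypotheses of Theorem \ref{thm:step7}.
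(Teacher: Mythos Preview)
Your proposal is correct and follows essentially the same approach as the paper's own proof: both verify the hypotheses of Theorem~\ref{thm:step7}, compute $m=7$, $b=7/2$, check $2\nmid 7$, obtain $t=\max\{21,19\}=21$, confirm $k=3>\log_3 21$, and conclude $(21,3,1,3)=11$. The only cosmetic difference is that you bound $\log_3 21$ via $21<27=3^3$ whereas the paper cites the numerical value $\log_3 21 \approx 2.771$.
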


\begin{proof}
We use Step (7) of our algorithm as $C, n >1$ and $0 < j < 2$. Here $m = 21/3 = 7$, $mj = 7\cdot 1 = 7$, and $n-1 = 2$. As, $ 2 \nmid 7$, hence $(21, 3, 1, k)$ is solvable for any integer $k > \log_nt$. We now find $t$. In order to find $t$, we need to know $b$ and $\{b\}$. $b = \dfrac{mj}{n-1} = \dfrac{7}{2}$. Hence, $\floor{b} = 3$, and $\{b\} = \dfrac{1}{2}$.

\noindent
$t = \max \Bigg\{ \dfrac{21-\Big(\dfrac{7}{2}\Big)\cdot 3}{1-\dfrac{1}{2}}, 
 \dfrac{\Big(\dfrac{7}{2}\Big)\cdot 3 - 1}{\dfrac{1}{2}}\Bigg\} = \max \{21, 19\} = 21$. As $k > \log_3 21 = 2.771$, hence, $k = 3$. Thus, the 21CT, $(21, 3, 1, 3)$ is solvable. Moreover, $d_3 = mj + \floor{b} + 1 = 7 + 3 + 1 = 11$. Therefore, $(21, 3, 1, 3) = 11$.
\end{proof}

\begin{corollary}\label{cor:intresults3}
Suppose $C, n > 1$. If $\gcd\Big((n-1), \dfrac{C}{n}\Big) = 1$, then $(C, n, j, k)$ is solvable for all $0 \leq j \leq n-1$ and an appropriate $k$. 
\end{corollary}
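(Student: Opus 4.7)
The plan is to split into three sub-cases according to the value of $j$, appealing directly to the earlier theorems, and then reduce the only substantive case to a short number-theoretic observation.

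First I would dispense with the two boundary cases. For $j = 0$, Theorem \ref{thm:step5} immediately gives that $(C, n, 0, k)$ is solvable for any $k \geq \log_n C$, so no further work is needed. Similarly, for $j = n-1$, Theorem \ref{thm:step6} guarantees that $(C, n, n-1, k)$ is solvable for any $k > \log_n(C-1)$. These two cases use neither the coprimality hypothesis nor anything beyond citing the earlier results.

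The substantive content lies in the middle range $0 < j < n-1$. Here I would invoke Theorem \ref{thm:step7}, which says that $(C, n, j, k)$ is solvable if and only if $(n-1) \nmid mj$, where $m = C/n$. So the problem reduces to showing: under the hypothesis $\gcd(n-1, m) = 1$, the relation $(n-1) \nmid mj$ holds for every $j$ with $0 < j < n-1$. This is the only step that uses the coprimality assumption, and it will be the key observation — though an elementary one. By a standard fact from elementary number theory, if $(n-1) \mid mj$ and $\gcd(n-1, m) = 1$, then $(n-1) \mid j$. But $0 < j < n-1$ forbids $(n-1) \mid j$, giving a contradiction. Hence $(n-1) \nmid mj$ in this range, and Theorem \ref{thm:step7} supplies both solvability and an explicit $k$ (namely any integer exceeding $\log_n t$ with $t$ as defined there).

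I do not anticipate any real obstacle: the proof is essentially a bookkeeping exercise over the three ranges of $j$, with the coprimality hypothesis entering only to rule out the divisibility condition in Theorem \ref{thm:step7}. The only mild subtlety is remembering that in the middle range the value of $k$ depends on $j$ through $b = mj/(n-1)$, so the phrase ``an appropriate $k$'' in the statement must be understood as allowing $k$ to depend on $j$; once this is noted, the argument is complete.
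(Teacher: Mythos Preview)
Your proof is correct, but it takes a different route from the paper. The paper's argument is a one-line counting trick: by Theorem~\ref{thm:interestingresult1}, for a fixed $n$ the number of solvable tricks equals $n+1-\gcd(C/n,\,n-1)$; under the hypothesis this is $n$, which is already the total number of admissible values of $j$, so every $j$ must give a solvable trick. Your approach instead bypasses the counting theorem entirely and argues directly from Theorems~\ref{thm:step5}, \ref{thm:step6}, and \ref{thm:step7}, handling the boundary cases $j=0$ and $j=n-1$ trivially and reducing the interior range to the elementary divisibility observation that $\gcd(n-1,m)=1$ and $(n-1)\mid mj$ force $(n-1)\mid j$. Your argument is more self-contained (it does not lean on the somewhat heavier Theorem~\ref{thm:interestingresult1}, whose proof already embeds the same case split), while the paper's version is shorter once that theorem is in hand. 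Both are fine; in fact your route is essentially what underlies the proof of Theorem~\ref{thm:interestingresult1} itself, so the two arguments are close cousins.
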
 

\begin{proof}
By Theorem \ref{thm:interestingresult1}, the number of solvable tricks for a specific $n$, is equal to $n+1-\gcd\Big(\dfrac{C}{n}, n-1\Big)$. As $\gcd\Big(\dfrac{C}{n}, n-1\Big) = 1$, hence the number of solvable tricks is $n+1 - 1 = n$ which are for all the possible values of $j = 0, 1, 2, \ldots, n-1$. Hence, $(C, n, j, k)$ is solvable for all $0 \leq j \leq n-1$ and an appropriate $k$.

\end{proof}

\begin{corollary}
Suppose $C, n > 1$. If $C = n(n-1)$, then $(C, n, j, k)$ is not solvable for any $0 < j < n-1$. To find 
\end{corollary}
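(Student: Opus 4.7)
The plan is to reduce this directly to the ``if and only if'' characterization in Theorem \ref{thm:step7}. That theorem says that when $C, n > 1$ and $0 < j < n-1$, the trick $(C, n, j, k)$ is solvable precisely when $(n-1) \nmid mj$, where $m = C/n$. So the task is simply to verify that under the hypothesis $C = n(n-1)$, the divisibility $(n-1) \mid mj$ always holds, forcing non-solvability.

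First I would compute $m$ explicitly: since $C = n(n-1)$ we have
\[
m = \frac{C}{n} = \frac{n(n-1)}{n} = n-1.
\]
Then for any integer $j$ with $0 < j < n-1$,
\[
mj = (n-1)j,
\]
which is obviously a multiple of $n-1$. Hence $(n-1) \mid mj$ for every such $j$.

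Finally, applying Theorem \ref{thm:step7} in the contrapositive direction (Part 2 of its proof), the condition $(n-1) \mid mj$ rules out solvability of $(C, n, j, k)$ for any $k$. This completes the argument.

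There is essentially no obstacle here: the result is a one-line corollary of Theorem \ref{thm:step7} once the value $m = n-1$ is substituted, because the factor $n-1$ coming from $m$ absorbs the $n-1$ in the denominator of $b = mj/(n-1) = j$, making $b$ an integer for every admissible $j$. The only care needed is to note that the range $0 < j < n-1$ is nonempty exactly when $n \geq 3$; for $n = 2$ the statement is vacuous, so there is nothing to check in that case.
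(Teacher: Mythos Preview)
Your proof is correct and follows essentially the same route as the paper: compute $m = C/n = n-1$, observe that $mj = (n-1)j$ is divisible by $n-1$ for every $0 < j < n-1$, and invoke Theorem~\ref{thm:step7} to conclude non-solvability. Your added remarks that $b = j \in \mZ$ and that the case $n=2$ is vacuous are fine observations but not needed for the argument.
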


\begin{proof}
Using previous notation, $m = \dfrac{C}{n}$, we see that $C = n(n-1)$ is the same as $m = n-1$. Thus, $n-1 \mid mj$ for all $0 < j < n-1$. Hence, by Theorem \ref{thm:step7}, $(C, n, j, k)$ is not solvable.
\end{proof}

\section{Conclusion}

We have found a complete answer to the \hyperref[gen-ques]{general question} posed at the start of Section 4 (Main Result). The \hyperref[algorithm]{algorithm} presented in that section shows us a way of determining if a trick is solvable while starting with $C$ cards, split into $n$ stacks, with $j$ stacks going on top of the stack with the desired card after every iteration. In brief, here is a summary of questions that are answered in this paper.
\begin{itemize}
\item If Magi is handed $C$ $(>1)$ cards and is asked to split them into $n$ $(>1)$ stacks,  without any specific $j$, then he can perform the trick $(C, n, 0, k)$ or $(C, n, n-1, k)$ for appropriate $k$'s as they are solvable due to Theorem \ref{thm:step5}, and \ref{thm:step6} respectively.
\item If Magi is handed $C$ $(>1)$ cards and is asked to split them into $n$ $(>1)$ stacks, with a specific $0<j <n-1$, then he can use Theorem \ref{thm:step7} to determine if the trick $(C, n, j, k)$ is solvable for any $k$ and proceed. 
\end{itemize}
In addition, we also found the total number of solvable tricks for a given $C$ in Theorem \ref{thm:interestingresult1}. Finally, using our algorithm we list some solvable tricks other than the 21CT below, that can be performed by anyone with a deck of cards  to impress their friends. 

\smallskip
\noindent
\begin{table}[H]
\centering
\begin{tabular}{|c|c|c|}
\hline 
$(20, 4, 2, 3) = 14$ & $(28, 4, 2, 3) = 19$ & $(36, 6, 4, 3) = 29$ \\ 
\hline 
$(21, 7, 5, 2) = 18$ & $(30, 5, 3, 3) = 23$ & $(36, 9, 3, 2) = 14$ \\ 
\hline 
$(24, 6, 4, 3) = 20$ & $(32, 4, 2, 3) = 22$ & $(39, 3, 1, 4) = 20$ \\ 
\hline 
$(25, 5, 3, 3) = 19$ & $(33, 3, 1, 4) = 17$ & $(40, 4, 2, 3) = 27$ \\ 
\hline 
$(27, 3, 1 , 4) = 14$ & $(35, 5, 3, 3) = 27$ & $(40, 8, 5, 2) = 29$ \\ 
\hline 
\end{tabular} 
\caption{List of 15 solvable tricks}
\end{table}

\end{document}